\font\smallit=cmti10
\font\smalltt=cmtt10
\renewcommand\section{\@startsection {section}{1}{\z@}
{-30pt \@plus -1ex \@minus -.2ex}
{2.3ex \@plus.2ex}
{\normalfont\normalsize\bfseries}}
\renewcommand\subsection{\@startsection{subsection}{2}{\z@}
{-3.25ex\@plus -1ex \@minus -.2ex}
{1.5ex \@plus .2ex}
{\normalfont\normalsize\bfseries}}
\renewcommand{\@seccntformat}[1]{\csname the#1\endcsname. }
\renewcommand{\(}{\ensuremath{\left(}}
\renewcommand{\)}{\ensuremath{\right)}}
\renewcommand{\[}{\ensuremath{\left[}}
\renewcommand{\]}{\ensuremath{\right]}}
\newtheorem{theorem}{Theorem}
\newtheorem{conjecture}{Conjecture}
\theoremstyle{plain}
\newtheorem{problem}[theorem]{Problem}
\begin{document}

\begin{center}
\uppercase{\bf A Note on 3-free Permutations}
\vskip 20pt
{\bf Bill Correll, Jr.}\\
{\smallit MDA Information Systems LLC, Ann Arbor, MI, USA}\\
{\tt william.correll@mdaus.com}\\
\vskip 10pt
{\bf Randy W. Ho}\\
{\smallit Garmin International, Chandler, AZ, USA}\\
{\tt GooglyPower@gmail.com}\\
\end{center}
\vskip 30pt
\centerline{\smallit Received: , Revised: , Accepted: , Published: } % We will fill in the dates
\vskip 30pt

\centerline{\bf Abstract}

\noindent
Let $\theta(n)$ denote the number of permutations of $\{1,2,\ldots,n\}$ that do not contain a 3-term arithmetic progression as a subsequence.  Such permutations are known as 3-free permutations.  We present a dynamic programming algorithm to count all 3-free permutations of $\{1,2,\ldots,n\}$.  We use the output to extend and correct enumerative results in the literature for $\theta(n)$ from $n=20$ out to $n=90$ and use the new values to inductively improve existing bounds on $\theta(n)$. 

% keywords are optional
\bigskip\noindent \textbf{Keywords:} 3-free permutation; Costas array

\pagestyle{myheadings} 
\markright{\smalltt INTEGERS: 17 (2017)\hfill} 
\thispagestyle{empty} 
\baselineskip=12.875pt 
\vskip 30pt

\section{Introduction and Results}
\label{intro}
Let $n$ be a positive integer and let $\[n\]$ denote the set $\{1,2,\ldots,n\}$.  Let $\alpha=\(a_1, a_2, \ldots, a_n\)$ be a permutation of $\[n\]$.  Then $\alpha$ is a \textit{3-free permutation} if and only if, for every index $j$ $(1 \leq j \leq n)$, there do not exist indices $i < j$ and $k > j$ such that $a_i + a_k = 2a_j$.  Let $\theta(n)$ be the function that gives the number of 3-free permutations of $\[n\]$.  Of course the value of $\theta(n)$ will be unchanged if we replace $\[n\]$ with any set of $n$ integers in arithmetic progression so we will hereafter use $\[n\]$ when referring to $\theta(n)$.  In 1973 Entringer and Jackson initiated the study of 3-free permutations by posing

\begin{problem}
[Entringer and Jackson \cite{E2440source}]
Does every permutation of $\{0, 1, \ldots, n\}$ contain an arithmetic progression of at least three terms?
\label{keyprob}
\end{problem}

Three solutions (see \cite{lyndonsol}, \cite{oddasol}, \cite{thomassol}) to Problem \ref{keyprob} showing that the answer is ``No'' along with comments \cite{simmons} containing a table of values of $\theta(n)$ for $1 \leq n \leq 20$ were published.  The solutions of Odda \cite{oddasol} and Thomas \cite{thomassol} contained the first constructions for 3-free permutations.  Odda describes how to construct one 3-free permutation for each $n$.  Thomas devised a method to generate $2^{n-1}$ 3-free permutations for each $n$.  Thomas's examples show that the sets of permutations his method generates aren't exhaustive.  

The purpose of this note is to present an algorithm that counts the number of $3$-free permutations of $n$ consecutive integers for each $n$.  We correct and extend the tables of known values of $\theta(n)$ out to $n=90$ and improve upper and lower bounds by proving the following four results.

\begin{theorem}
\label{mainthm2}
For positive integers $n \geq 45$,
\begin{eqnarray}
\theta(n) \geq \frac{c_1^n}{2}, c_1 = \sqrt[80]{2\theta(80)} = 2.201\ldots.
\end{eqnarray}
\end{theorem}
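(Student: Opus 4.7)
The plan is to prove the bound by induction on $n$, with a ``parity-concatenation'' lemma as the main engine and a direct numerical verification on a finite range of base cases. The lemma asserts that for every positive integer $n$, one has $\theta(2n) \geq 2\theta(n)^2$ and $\theta(2n+1) \geq 2\theta(n)\theta(n+1)$. To prove it, split $\[2n\]$ into its odd and even elements (and likewise for $\[2n+1\]$, which gives $n+1$ odd and $n$ even elements). Each of these subsets is an arithmetic progression, so a permutation of it is 3-free if and only if the corresponding rescaled permutation of $\[n\]$ or $\[n+1\]$ is 3-free. Given any 3-free ordering $\sigma$ of the odds and $\tau$ of the evens, I would form either concatenation $\sigma\tau$ or $\tau\sigma$. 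Any putative 3-AP $a_i+a_k=2a_j$ with $i<j<k$ whose indices straddle the boundary would force exactly one of $a_i,a_k$ to be odd and the other even, making $a_i+a_k$ odd and thus different from the even number $2a_j$; progressions confined to a single side are ruled out by the 3-freeness of $\sigma$ or $\tau$. Thus all $2\theta(n)^2$ (resp.\ $2\theta(n)\theta(n+1)$) resulting permutations are distinct and 3-free.

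With the lemma in hand I would run induction on $n$. The base cases are $45 \leq n \leq 89$: for each such $n$, the inequality $\theta(n) \geq c_1^n/2$ is equivalent to $2\theta(n) \geq (2\theta(80))^{n/80}$, which can be verified directly from the table of values produced by the dynamic programming algorithm. (This also explains the choice of starting index: $n=45$ is presumably the smallest value at which the required numerical inequality first holds.) For the inductive step, fix $n \geq 90$ and assume the bound at every index in $\[45, n-1\]$. Writing $n = 2m$ or $n = 2m+1$, in either case both $\lfloor n/2 \rfloor$ and $\lceil n/2 \rceil$ lie in $\[45, n-1\]$, so the inductive hypothesis applies. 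Plugging into the lemma yields $\theta(n) \geq 2(c_1^{n/2}/2)^2 = c_1^n/2$ in the even case and $\theta(n) \geq 2(c_1^m/2)(c_1^{m+1}/2) = c_1^n/2$ in the odd case, closing the induction.

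The only real subtlety is the lemma itself, where it is important to carry the extra factor of $2$ coming from the two possible orderings of the parity blocks; without it the same induction would yield only the weaker constant $\theta(80)^{1/80}$ in place of $c_1 = (2\theta(80))^{1/80}$. The remaining work is computational: confirming the 45 base cases from the computed table and, to justify the stated threshold $n \geq 45$, checking that the inequality actually fails at $n = 44$.
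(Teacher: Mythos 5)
Your argument is essentially the paper's: the published proof likewise reduces the theorem, via the Davis--Entringer--Graham--Simmons doubling inequalities $\theta(2n)\ge 2\theta(n)^2$ and $\theta(2n+1)\ge 2\theta(n)\theta(n+1)$ (the paper's Theorem 5, which you reprove with the standard odd/even concatenation instead of citing it), to a computational check of a finite base range from the table, and your choice of base range $[45,89]$ with $\min_{45\le n\le 89}(2\theta(n))^{1/n}$ attained at $n=80$ is in fact the version consistent with the stated constant $c_1=(2\theta(80))^{1/80}$. The only slip is in your closing sharpness remark, which does not affect the proof: $2\theta(n)\ge c_1^{\,n}$ already holds at $n=44$ and first fails at $n=43$, so the proposed check at $n=44$ would not exhibit a failure.
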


\begin{theorem}
\label{mainthm1}
For positive integers $n \geq 36$,
\begin{eqnarray}
\theta(n) \leq \frac{c_2^n}{21}, c_2 = \sqrt[64]{21\theta(64)} = 2.364\ldots.
\end{eqnarray}
\end{theorem}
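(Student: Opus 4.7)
The plan is to reduce the inequality for all $n \geq 36$ to a finite base range by means of an iterable submultiplicative-type combinatorial bound. Using the values of $\theta(n)$ for $n \leq 90$ output by the dynamic programming algorithm, I would first verify directly that $21\,\theta(r) \leq c_{2}^{\,r}$ for every integer $r$ with $36 \leq r \leq 99$; at $r = 64$ this is an equality by the definition of $c_{2}$, while for the other values it is a finite numerical check.

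Next, I would establish the shift inequality $\theta(n + 64) \leq 21\,\theta(n)\,\theta(64)$ (or, more ambitiously, the full submultiplicative bound $\theta(n+m) \leq 21\,\theta(n)\,\theta(m)$). The natural starting point is the value-split decomposition: any 3-free permutation of $[n+64]$ restricts, on the low-value set $[n]$ and on the high-value set $[n+1,n+64]$, to a pair of 3-free permutations counted by $\theta(n)$ and $\theta(64)$ respectively, together with an interleaving pattern specifying which positions carry low values. The naive interleaving count $\binom{n+64}{64}$ is exponentially too large, so the heart of the argument is to show that the 3-free condition rules out all but at most $21$ of these interleavings (by producing a forbidden arithmetic progression crossing the low/high boundary), uniformly in $n$.

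With both pieces in hand, the bound follows by strong induction on $n$: for $36 \leq n \leq 99$ invoke the base-case check, and for $n \geq 100$ apply the shift inequality together with the inductive hypothesis at $n - 64 \geq 36$ to obtain
\begin{equation*}
\theta(n) \;\leq\; 21\,\theta(n-64)\,\theta(64) \;\leq\; 21 \cdot \frac{c_{2}^{\,n-64}}{21} \cdot \frac{c_{2}^{\,64}}{21} \;=\; \frac{c_{2}^{\,n}}{21},
\end{equation*}
which closes the induction.

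The main obstacle is the submultiplicative step: converting the trivial value-split bound, whose $\binom{n+64}{64}$ prefactor is exponential in $n$, into one with a universal constant factor of $21$. I expect this to require a careful structural analysis of which interleavings of the low and high subsequences are compatible with 3-freeness of the merged permutation, likely exploiting the reversal and value-complementation symmetries of the problem to identify canonical representatives of the interleaving classes.
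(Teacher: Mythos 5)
There is a genuine gap at the central step of your plan. The inequality you need --- $\theta(n+64)\leq 21\,\theta(n)\,\theta(64)$, or the full submultiplicative bound $\theta(n+m)\leq 21\,\theta(n)\,\theta(m)$ --- is not known, and the argument you sketch for it does not go through. The constant $21$ comes from Sharma's inequality $\theta(n)\leq 21\,\theta\!\left(\lceil n/2\rceil\right)\theta\!\left(\lfloor n/2\rfloor\right)$ (Theorem \ref{sharmaindthm}), whose splitting is by \emph{parity of values}, not by low versus high values. The parity split is what makes a bounded-interleaving analysis possible: two integers have an integer average only when they share parity, so the odd and even classes interact with the 3-free condition in a very restricted way. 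For the low/high split $[n]$ versus $\{n+1,\dots,n+64\}$ there is no analogous mechanism, and no structural result in the literature bounds the number of admissible interleavings by a constant independent of $n$; you correctly identify this as ``the heart of the argument'' but then leave it as something you ``expect'' to be provable. Note also that your more ambitious version, $\theta(n+m)\leq 21\,\theta(n)\,\theta(m)$ for all $n,m$, would by Fekete's lemma give the existence of $\lim_n \theta(n)^{1/n}$, which this paper explicitly records as an open problem --- a strong indication that this lemma cannot simply be asserted or obtained by a routine case analysis of boundary-crossing progressions. A secondary, concrete defect: your base range $36\leq r\leq 99$ cannot be checked from the available data, since Table \ref{T1} stops at $n=90$; a shift-by-$64$ recursion forces a window of $64$ consecutive verified values, which the computation does not supply.

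The repair is to abandon the shift inequality and induct with Sharma's halving inequality itself, which is exactly what the paper does: verify numerically from Table \ref{T1} that $21\,\theta(n)\leq c_2^{\,n}$ for $42\leq n\leq 83$ (the maximum of $(21\theta(n))^{1/n}$ on this window is attained at $n=64$ and equals $c_2$ by definition), and then for $n\geq 84$ apply Theorem \ref{sharmaindthm} together with the inductive hypothesis at $\lceil n/2\rceil$ and $\lfloor n/2\rfloor$, which both lie in an already-verified range, to get $\theta(n)\leq 21\cdot\frac{c_2^{\lceil n/2\rceil}}{21}\cdot\frac{c_2^{\lfloor n/2\rfloor}}{21}=\frac{c_2^{\,n}}{21}$. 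Finally the cases $36\leq n\leq 41$ are checked directly from the table (the bound fails for $n\leq 35$). Your overall scheme --- finite base window plus an iterable combinatorial inequality --- is the right shape, but the iterable inequality must be the proven doubling one, not an unproven additive-shift one.
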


\begin{theorem}
\label{mainthm3}
For positive integers $k \geq 6$ and $n=2^k$,
\begin{eqnarray}
\theta(n) \geq \frac{c_3^n}{2}, c_3 = \sqrt[64]{2\theta(64)} = 2.279\ldots.
\end{eqnarray}
\end{theorem}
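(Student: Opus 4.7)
The plan is to prove the doubling inequality $\theta(2n)\geq 2\theta(n)^2$ and then iterate it starting from $n=64$. Given any two 3-free permutations $\pi,\sigma$ of $[n]$, I would build two permutations of $[2n]$ by concatenating an ``even'' block and an ``odd'' block in opposite orders:
\begin{align*}
E(\pi,\sigma) &= \(2\pi_1,\ldots,2\pi_n,\,2\sigma_1-1,\ldots,2\sigma_n-1\),\\
O(\pi,\sigma) &= \(2\sigma_1-1,\ldots,2\sigma_n-1,\,2\pi_1,\ldots,2\pi_n\).
\end{align*}
Each lists all even values of $[2n]$ followed by all odd values, or the reverse.

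To verify 3-freeness I would use a parity argument: any 3-term AP $a_i,a_j,a_k$ with $i<j<k$ satisfies $a_i+a_k=2a_j$, which forces $a_i$ and $a_k$ to share a parity. Since each parity occupies a contiguous range of positions in $E$ or $O$, the middle index $j$ must lie in the same block, so $a_j$ has that parity too. Applying $x\mapsto x/2$ on the even block or $x\mapsto(x+1)/2$ on the odd block then converts the AP into a 3-term AP in $\pi$ or $\sigma$, contradicting their 3-freeness.

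The map $(\pi,\sigma,\star)\mapsto \star(\pi,\sigma)$ with $\star\in\{E,O\}$ is injective (the parity of the first entry recovers $\star$, after which $\pi$ and $\sigma$ are read off the two blocks), giving $\theta(2n)\geq 2\theta(n)^2$. Induction on $k\geq 6$ then finishes the proof. The base case $k=6$ is the definitional identity $c_3^{64}/2=\theta(64)$, and the inductive step is
\begin{equation*}
\theta(2^{k+1}) \geq 2\,\theta(2^k)^2 \geq 2\bigl(c_3^{2^k}/2\bigr)^2 = c_3^{2^{k+1}}/2.
\end{equation*}

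The main subtlety is securing the factor of $2$ in the doubling inequality. A single orientation only yields $\theta(n)^2$ permutations, which would lose a factor of $2$ at each step and fall short of the required constant $c_3=\sqrt[64]{2\theta(64)}$; the two orientations $E$ and $O$ exactly recover this factor, and the parity case analysis in Step 2 is what makes these concatenations legitimate 3-free permutations.
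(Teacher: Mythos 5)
Your proof is correct and takes essentially the same route as the paper: the paper proves Theorem \ref{mainthm3} by induction on $k$ from the base case $n=64=2^6$ (where $\theta(64)=c_3^{64}/2$ by definition of $c_3$), using the doubling inequality $\theta(2n)\geq 2\theta^2(n)$ of Theorem \ref{inductivethm2}, which is exactly your inductive step. The only difference is that you re-derive that doubling inequality via the even-block/odd-block parity construction instead of citing Davis--Entringer--Graham--Simmons; your derivation is sound and is the standard proof of that lemma.
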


\begin{theorem}
\label{mainthm4}
For all positive integers $n$,
\begin{eqnarray}
\theta(n) \geq \frac{nc_4^n}{40}, c_4 = \sqrt[40]{\theta(40)} = 2.156\ldots.
\end{eqnarray}
\end{theorem}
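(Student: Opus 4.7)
The plan is to prove the bound by strong induction on $n$, with the inductive step supplied by a parity-based splitting construction and the base cases settled by direct comparison against the tabulated values of $\theta(n)$.

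The combinatorial core is the following splitting lemma: for every $n \geq 2$,
\begin{equation*}
\theta(n) \geq 2\,\theta(\lfloor n/2 \rfloor)\,\theta(\lceil n/2 \rceil).
\end{equation*}
To establish it, let $E$ and $O$ denote the even and odd elements of $[n]$; since each is an arithmetic progression, 3-free orderings of $E$ and $O$ are enumerated by $\theta$ applied to the respective cardinality. I claim that the concatenation of any 3-free ordering of $E$ with any 3-free ordering of $O$, in either block order, is a 3-free permutation of $[n]$. For any hypothetical witness $a_p, a_q, a_r$ with $a_p + a_r = 2 a_q$ and $p < q < r$, the sum $a_p + a_r$ is even, so $a_p$ and $a_r$ share a parity and lie in the same half of the concatenation; the positional middle entry $a_q$ is then trapped in that same half, contradicting 3-freeness of the ordering within that half. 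The two block orders (evens first, or odds first) produce distinct permutations, distinguished by the parity of their first entry, supplying the factor of $2$.

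Assuming the splitting lemma, I would induct on $n$. For $n \geq 80$, writing $m_1 = \lfloor n/2 \rfloor$ and $m_2 = \lceil n/2 \rceil$, the inductive hypothesis at $m_1, m_2 < n$ combined with the lemma gives
\begin{equation*}
\theta(n) \geq 2 \cdot \frac{m_1 c_4^{m_1}}{40} \cdot \frac{m_2 c_4^{m_2}}{40} = \frac{m_1 m_2}{800}\, c_4^n,
\end{equation*}
and a brief calculation shows $m_1 m_2 \geq 20 n$ precisely when $n \geq 80$ (for even $n$ this becomes $n^2/4 \geq 20 n$; for odd $n$ it becomes $(n^2-1)/4 \geq 20 n$), which delivers $\theta(n) \geq n c_4^n / 40$.

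The remaining base cases $n = 1, 2, \ldots, 79$ are dispatched by direct numerical comparison, since all of these values of $\theta(n)$ are produced by the algorithm. The principal conceptual content lies in the splitting lemma: the hard part is confirming that the parity argument rules out every type of cross-block 3-AP, including ones whose middle term would need to sit in the ``wrong'' block, but the evenness of $a_p + a_r$ handles this uniformly. Everything else is routine calculation.
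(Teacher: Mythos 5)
Your proof is correct. The splitting lemma you re-derive is precisely the Davis--Entringer--Graham--Simmons inequality $\theta(2n)\ge 2\theta^2(n)$, $\theta(2n+1)\ge 2\theta(n)\theta(n+1)$ (Theorem \ref{inductivethm2}), which the paper cites rather than proves; your parity-block argument for it is sound, including the key point that $a_p+a_r=2a_q$ forces $a_p,a_r$ into one parity block and then traps the positional middle entry $a_q$ in that same block. Where you genuinely diverge from the paper is in how the linear factor $n$ is generated. The paper normalizes, setting $a_n=\theta(n)/(n\,c_4^n)$, checks $a_n\ge \tfrac{1}{40}$ on the window $40\le n\le 79$ from Table \ref{T1}, and proves the monotonicity relations $a_{2n}\ge a_n$ and $a_{2n+1}\ge a_{n+1}$; that step applies the full inductive bound to only one half and consumes the auxiliary exponential bound $\theta(n)\ge c_4^n$ for $n\ge 40$ (Theorem \ref{newThm3pt1}) on the other half. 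You instead apply the inductive bound to both halves and extract the factor $n$ from the quadratic growth of $\lfloor n/2\rfloor\lceil n/2\rceil$, needing only $m_1m_2\ge 20n$, which your arithmetic correctly establishes for all $n\ge 80$ (for odd $n$ the threshold is $n\ge 81$, which is what matters). This makes your argument self-contained---no $a_n$ bookkeeping and no appeal to Theorem \ref{newThm3pt1}---at the price of a larger base-case range $1\le n\le 79$, all of which is covered by the computed values in Table \ref{T1} (the paper's own proof also implicitly relies on a direct check for $n<40$, since its recursion only propagates from the window $[40,79]$ upward). Both proofs ultimately rest on the same two ingredients, the DEGS doubling inequality and a block of tabulated values of $\theta(n)$, so the difference is one of organization rather than substance, but your version is arguably the cleaner induction.
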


The existence of $\lim \theta(n)^{1/n}$ as $n \rightarrow \infty$ was identified in \cite{sharma} as a key problem in the study of $\theta(n)$.  It remains an open question although Theorems \ref{mainthm2}, \ref{mainthm1} imply that the limit lies within the interval $\[c_1 , c_2\]$ if it exists.  The first author explored connections between 3-free permutations and Costas arrays in \cite{correll}, where slightly weaker versions of Theorems \ref{mainthm2} and \ref{mainthm1} were stated without proof.  

For clarity, we comment here that we are not presenting any results on the related problem of evaluating and bounding the function $r(n)$ giving the longest 3-free subsequence of the sequence $1,2,\ldots,n$.  The latest developments in solving that problem currently appear in \cite{Latestonr}. 

\section{Some Results From the Literature on $\theta(n)$}
Davis, Entringer, Graham, and Simmons \cite{degs} established a number of bounds on the growth of $\theta(n)$ including the following:

%\begin{theorem} (Davis, Entringer, Graham, and Simmons, \cite{degs})
%For positive integers $n$,
%\label{inductivethm1}
%\begin{eqnarray}
%\theta(n+1) \leq \left[ \frac{n+3}{2} \right] \theta(n).
%\end{eqnarray}
%\end{theorem}

\begin{theorem}
[Davis, Entringer, Graham, and Simmons, \cite{degs}]
For positive integers $n$,
\label{inductivethm2}
\begin{align}
\theta(2n) & \geq 2\theta^2(n), \\
\theta(2n+1) & \geq 2\theta(n)\theta(n+1).
\end{align}
\end{theorem}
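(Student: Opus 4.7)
The plan is to build $3$-free permutations of $[2n]$ (resp.\ $[2n+1]$) by concatenating a $3$-free ordering of the odd elements with a $3$-free ordering of the even elements, in either of the two possible block orders, and then count these constructions.

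First I would observe that $\{1,3,5,\ldots,2n-1\}$ and $\{2,4,6,\ldots,2n\}$ are each $n$-term arithmetic progressions. Invoking the remark from the introduction that $\theta$ is unchanged when $[n]$ is replaced by any $n$-term AP, each of these two blocks admits exactly $\theta(n)$ orderings that are $3$-free within the block. Writing one ordering after the other, in either of the two possible block orders, produces $2\theta(n)^2$ candidate permutations of $[2n]$; the two families are disjoint because the parity of the first entry records which block came first. The same construction applied to $[2n+1]$, in which the odds form an AP of length $n+1$ and the evens of length $n$, yields $2\theta(n)\theta(n+1)$ candidates.

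The main step is checking that every such concatenation is itself $3$-free. Suppose that some concatenation contains indices $i<j<k$ with $a_i+a_k=2a_j$. The right-hand side is even, so $a_i$ and $a_k$ must share the same parity and therefore lie in the same block. Since the positions are partitioned into an initial segment and a final segment according to block, the position $j$ sandwiched between $i$ and $k$ forces $a_j$ to lie in that same block as well. Applying the affine bijection that identifies the block with $[n]$ (or $[n+1]$) then converts the alleged $3$-AP into a $3$-AP in a permutation chosen to be $3$-free, a contradiction.

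I expect the only mildly delicate point to be ruling out $3$-APs that straddle the two blocks; the parity observation dispatches this cleanly, so no serious obstacle is anticipated. The claimed inequalities $\theta(2n)\ge 2\theta^2(n)$ and $\theta(2n+1)\ge 2\theta(n)\theta(n+1)$ then follow at once by counting the constructed $3$-free permutations.
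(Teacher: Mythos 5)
Your proposal is correct: the parity of $a_i+a_k=2a_j$ forces $a_i,a_k$ into the same (odd or even) block, the contiguity of the block positions then forces $a_j$ into that block as well, and the affine identification of each block with $[n]$ (or $[n+1]$) reduces any alleged $3$-AP to one inside a $3$-free ordering, so the $2\theta^2(n)$ (resp.\ $2\theta(n)\theta(n+1)$) concatenations are indeed distinct $3$-free permutations. The paper itself only quotes this theorem from Davis, Entringer, Graham, and Simmons without proof, and your odd/even concatenation argument is essentially the standard one from that source (the same idea underlying Thomas's construction), so there is nothing to add.
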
  

\begin{theorem}
[Davis, Entringer, Graham, and Simmons, \cite{degs}]
\label{DEGSThm}
For $n=2^k, k \geq 4$,
\begin{eqnarray}
\theta(n)~\geq~\frac{c^n}{2}, c=\sqrt[16]{2\theta(16)}=2.248\ldots.
\label{lpow2}
\end{eqnarray}
\end{theorem}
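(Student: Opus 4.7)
The plan is to prove Theorem \ref{DEGSThm} by induction on $k$, with the doubling inequality $\theta(2m) \geq 2\theta^2(m)$ from Theorem \ref{inductivethm2} serving as essentially the only input.

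For the base case $k=4$, I would observe that the constant $c = \sqrt[16]{2\theta(16)}$ is defined precisely so that $c^{16}=2\theta(16)$, equivalently $\theta(16) = c^{16}/2$; hence the claimed inequality holds with equality at $n=16$.

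For the inductive step, I would assume $\theta(2^k) \geq c^{2^k}/2$ for some $k \geq 4$ and apply Theorem \ref{inductivethm2} with $m = 2^k$ to obtain
\begin{equation*}
\theta(2^{k+1}) = \theta(2 \cdot 2^k) \geq 2\theta^2(2^k) \geq 2\left(\frac{c^{2^k}}{2}\right)^2 = \frac{c^{2^{k+1}}}{2},
\end{equation*}
closing the induction.

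There is essentially no obstacle here: the shape $\theta(n) \geq c^n/2$ is self-similar under the doubling inequality, since squaring $c^n/2$ and multiplying by $2$ reproduces $c^{2n}/2$. The exponent $16$ and the constant $c$ are chosen precisely so that the induction is tight at the base, and the numerical value $c = 2.248\ldots$ follows immediately from the known value of $\theta(16)$. The only substantive fact being used, the doubling bound $\theta(2m)\geq 2\theta^2(m)$, is imported directly from Theorem \ref{inductivethm2}.
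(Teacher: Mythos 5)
Your proof is correct and is exactly the intended argument: the base case $n=16$ holds with equality by the choice of $c=\sqrt[16]{2\theta(16)}$, and the doubling inequality $\theta(2m)\geq 2\theta^2(m)$ of Theorem \ref{inductivethm2} propagates the bound $\theta(n)\geq c^n/2$ from $2^k$ to $2^{k+1}$. This is the same induction the paper itself invokes (with $64$ in place of $16$) to prove its improved Theorem \ref{mainthm3}, so nothing further is needed.
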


Sharma's dissertation \cite{sharmadiss} is noteworthy in that it established the long-conjectured result that $\theta(n)$ has an exponential upper bound.  Sharma used parity arguments to prove 

\begin{theorem}
[Sharma, \cite{sharma}]
For each $n \geq 3$,
\begin{eqnarray}
\theta(n) \leq 21\theta\(\left\lceil\frac{n}{2}\right\rceil\)\theta\(\left\lfloor\frac{n}{2}\right\rfloor\).
\end{eqnarray}
\label{sharmaindthm}
\end{theorem}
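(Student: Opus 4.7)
The plan is to exploit a parity decomposition. Given a 3-free permutation $\alpha=(a_1,\ldots,a_n)$ of $[n]$, let $\pi_o$ be the permutation of $[\lceil n/2 \rceil]$ obtained by extracting the odd entries of $\alpha$ in order of appearance and compressing them via $o \mapsto (o+1)/2$; define $\pi_e$ analogously for the even entries via $e \mapsto e/2$. Let $s \in \{0,1\}^n$ record the parity at each position. The map $\alpha \mapsto (\pi_o, \pi_e, s)$ is clearly injective.

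First I would verify that $\pi_o$ and $\pi_e$ are themselves 3-free. Since the compressions are affine bijections, they preserve 3-term arithmetic progressions: a 3-term AP in $\pi_o$ at indices $\ell_1<\ell_2<\ell_3$ would pull back to three odd entries of $\alpha$ in the same positional order satisfying $o_{\ell_1}+o_{\ell_3}=2o_{\ell_2}$, contradicting the 3-freeness of $\alpha$. The argument for $\pi_e$ is identical. Consequently $\alpha \mapsto (\pi_o, \pi_e, s)$ injects 3-free permutations of $[n]$ into the product of the sets of 3-free permutations of $[\lceil n/2 \rceil]$ and $[\lfloor n/2 \rfloor]$ together with the set of admissible parity patterns.

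It then suffices to show that for each pair $(\pi_o, \pi_e)$ at most $21$ parity patterns $s$ yield a 3-free reconstruction $\alpha$. The binding constraints beyond the 3-freeness of $\pi_o,\pi_e$ come from mixed-parity progressions: if $a_i$ and $a_k$ share parity but $a_i \not\equiv a_k \pmod 4$, then $(a_i+a_k)/2$ has the opposite parity and its position in $\alpha$ must lie outside $(i,k)$. My plan is to show that these cross-parity obstructions rigidly couple $s$ to $\pi_o$ and $\pi_e$: once the parities at a constant-sized window of boundary positions are specified, the remaining entries of $s$ are deterministically forced, so a short case analysis of admissible initial windows yields the constant $21$.

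The main obstacle is producing the specific constant $21$ rather than merely \emph{some} absolute constant. The qualitative statement that the parity interleaving is controlled by $O(1)$ free parameters is relatively soft, but locking down the explicit bound requires a complete enumeration of admissible configurations near the boundary, together with a verification that each such configuration admits at most one extension compatible with both $(\pi_o,\pi_e)$ and all cross-parity obstructions. I expect this careful case analysis, rather than any single conceptual step, to occupy the bulk of the proof.
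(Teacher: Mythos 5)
First, note that the paper you are working from does not prove this statement at all: it is quoted from Sharma \cite{sharma} as a known tool (the paper only uses it as the inductive engine for Theorem \ref{mainthm1}), so there is no in-paper proof to compare against; your proposal has to stand on its own as a reconstruction of Sharma's parity argument.

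Your first stage is correct and is indeed the standard opening move: extracting the odd and even entries, compressing by $o \mapsto (o+1)/2$ and $e \mapsto e/2$, gives 3-free permutations $\pi_o$ of $[\lceil n/2\rceil]$ and $\pi_e$ of $[\lfloor n/2\rfloor]$ (the affine compressions preserve 3-term progressions and subsequence extraction preserves positional order), the map $\alpha \mapsto (\pi_o,\pi_e,s)$ is injective, and you correctly identify that the only constraints not already absorbed into the 3-freeness of $\pi_o$ and $\pi_e$ are the mixed ones coming from same-parity pairs that are incongruent mod $4$, whose average has the opposite parity. The genuine gap is everything after that. The assertion that these mixed constraints ``rigidly couple'' $s$ to $(\pi_o,\pi_e)$, so that a constant-sized boundary window of parities forces the rest of $s$ deterministically, is exactly the content of Sharma's theorem, and you give no argument for it: you neither prove that the number of admissible interleavings per pair is bounded by an absolute constant, nor exhibit the case analysis that would produce the specific constant $21$. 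Moreover, your framing inverts the difficulty: you call the qualitative $O(1)$-interleavings statement ``relatively soft,'' but that statement is precisely what was open for decades --- it is equivalent to $\theta(n)$ having an exponential upper bound, which the paper explicitly credits Sharma's dissertation with being the first to establish. Without a proof of the bounded-interleaving structure (in Sharma's treatment this requires a careful analysis of how blocks of odd and even entries can alternate in a 3-free permutation), the proposal establishes only $\theta(n) \le I(n)\,\theta\(\lceil n/2\rceil\)\theta\(\lfloor n/2\rfloor\)$ with $I(n)$ the maximal number of valid parity patterns per pair, and gives no control on $I(n)$; a priori $I(n)$ could grow with $n$, so the stated inequality is not reached.
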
    
The key result in \cite{sharmadiss} (and in the follow-up journal paper \cite{sharma} as well as the book \cite{sharma2011structure}) he obtains from Theorem \ref{sharmaindthm} is 

\begin{theorem}
[Sharma, \cite{sharma}]
\label{SThm}
For $n \geq 11$,
\begin{eqnarray}
\label{sharmamain}
\theta(n)~\leq~\frac{2.7^n}{21}.
\end{eqnarray}
\end{theorem}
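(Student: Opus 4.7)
The plan is a straightforward strong induction on $n$ that uses Theorem \ref{sharmaindthm} directly in the inductive step, with the base cases verified by consulting a table of small values of $\theta(n)$.

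For the inductive step, I would assume $n \geq 22$ and that $\theta(m) \leq 2.7^m/21$ for every $m$ with $11 \leq m < n$. Since $n \geq 22$, both $\lceil n/2 \rceil$ and $\lfloor n/2 \rfloor$ lie in $[11, n-1]$, so Theorem \ref{sharmaindthm} combined with the induction hypothesis gives
\begin{equation*}
\theta(n) \leq 21 \cdot \frac{2.7^{\lceil n/2 \rceil}}{21} \cdot \frac{2.7^{\lfloor n/2 \rfloor}}{21} = \frac{2.7^n}{21}.
\end{equation*}
The two factors of $1/21$ cancel exactly against the leading $21$ of Sharma's recurrence, which is precisely the reason for choosing the denominator $21$ in the conclusion.

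The remaining work is to handle the base cases $n \in \{11, 12, \ldots, 21\}$: these are exactly the values for which at least one of $\lceil n/2 \rceil, \lfloor n/2 \rfloor$ drops below $11$, so the recursive step does not yet apply. For each such $n$ I would simply check the inequality $21\,\theta(n) \leq 2.7^n$ numerically against the tabulated values of $\theta(n)$ (which are available from the comments \cite{simmons} and independently confirmed by the algorithm of this note).

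The main obstacle is essentially none mathematically: once one has the recurrence of Theorem \ref{sharmaindthm}, its constant $21$ matches the denominator of the target bound, so any base constant $c$ for which $21\,\theta(n) \leq c^n$ holds on a starting block large enough to cover the halving step will automatically propagate. The only real design choice is how aggressive to make $c$; Sharma rounds to the convenient $c = 2.7$. Indeed, the improved Theorem \ref{mainthm1} in this note applies exactly the same induction pattern but with the tighter $c_2 = \sqrt[64]{21\,\theta(64)}$ and a correspondingly larger base block ($36 \leq n \leq 63$), which becomes feasible precisely because of the new extended table of $\theta(n)$ values.
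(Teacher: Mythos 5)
Your proposal is correct and is essentially the expected argument: the paper itself only cites this bound from Sharma without reproving it, and its own proof of the sharper Theorem \ref{mainthm1} follows exactly your pattern (numerical verification of $21\theta(n)\leq c^n$ on a base block, then induction via Theorem \ref{sharmaindthm}, with the factor $21$ cancelling). One small caution: the table in \cite{simmons} contains an erroneous value at $n=15$, so the base cases $11\leq n\leq 21$ should be checked against the corrected values in Table~\ref{T1}; the inequality holds either way (the tightest case being $n=11$, where $21\,\theta(11)=51660\leq 2.7^{11}\approx 55593$).
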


Sharma also improved Thomas's strict, constructive lower bound of $2^{n-1}$ for $n > 5$ by showing that: 

\begin{theorem}
[Sharma, \cite{sharma}]
\label{SharmaLowAll}
For all positive integers $n$,
\begin{eqnarray}
\theta(n)~\geq~\frac{n2^n}{10},
\end{eqnarray}
\end{theorem}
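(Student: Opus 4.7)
My plan is to proceed by strong induction on $n$, using the doubling recurrences of Theorem \ref{inductivethm2} to bootstrap from a finite base range of small values of $\theta(n)$.

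For the base step, I would verify the inequality $\theta(n) \geq n 2^n/10$ directly for $1 \leq n \leq 19$. For $n \leq 5$ this follows immediately from Thomas's constructive bound $\theta(n) \geq 2^{n-1}$, since $2^{n-1} \geq n 2^n/10$ exactly when $n \leq 5$. For $6 \leq n \leq 19$ it follows by inspection of tabulated values of $\theta(n)$, which are known and appear in the references.

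For the inductive step, assume $n \geq 20$ and that the bound holds for every smaller positive integer. Write $n = 2m$ or $n = 2m+1$; in either case $m \geq 10$, and both $m$ and $m+1$ are strictly less than $n$, so the inductive hypothesis applies to $\theta(m)$ and $\theta(m+1)$. In the even case, Theorem \ref{inductivethm2} gives
\begin{equation*}
\theta(2m) \;\geq\; 2\theta(m)^2 \;\geq\; 2\left(\frac{m 2^m}{10}\right)^{\!2} \;=\; \frac{m^2\, 2^{2m+1}}{100},
\end{equation*}
which is at least the target $(2m)2^{2m}/10 = m\, 2^{2m+1}/10$ precisely when $m \geq 10$. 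In the odd case,
\begin{equation*}
\theta(2m+1) \;\geq\; 2\theta(m)\theta(m+1) \;\geq\; \frac{m(m+1)\, 2^{2m+2}}{100},
\end{equation*}
and comparing with $(2m+1)2^{2m+1}/10$ reduces to $m^2-9m-5 \geq 0$, again valid for $m \geq 10$.

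The only real obstacle is pinning down the correct induction threshold: the algebra forces $m \geq 10$, which is why the base range must extend up through $n=19$ rather than stopping earlier. Beyond that, the proof is essentially a one-line application of the Davis--Entringer--Graham--Simmons recurrence, plus a small, fully mechanical base-case check.
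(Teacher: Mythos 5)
Your proof is correct: the base-range check for $1 \leq n \leq 19$ and the algebra in both the even case ($m \geq 10$) and the odd case ($m^2 - 9m - 5 \geq 0$ for $m \geq 10$) all verify, and the induction thresholds are consistent. Note that the paper itself gives no proof of this statement (it is quoted from Sharma \cite{sharma}); your argument is essentially the standard one, and it is the same doubling-recurrence induction with a finite base window that the paper uses for its own strengthened version, Theorem \ref{mainthm4}, where the normalized sequence $a_n = \theta(n)/(n\alpha^n)$ plays the role of your explicit inequalities.
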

but LeSaulnier and Vijay were able to establish

\begin{theorem}
[LeSaulnier and Vijay \cite{LeSV}]
For $n \geq 8$,
\label{LSVThm}
\begin{eqnarray}
\theta(n)~\geq~\frac{1}{2}c^n, \mbox{ where } c~=~(2\theta(10))^{\frac{1}{10}}~=~2.152.... 
\label{LSV} 
\end{eqnarray}
\end{theorem}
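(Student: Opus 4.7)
The plan is to prove the bound by strong induction on $n$, leveraging the DEGS recurrences from Theorem \ref{inductivethm2}. Setting $c = (2\theta(10))^{1/10}$ so that $2\theta(10) = c^{10}$ by construction, the claim becomes $2\theta(n) \geq c^n$ for every $n \geq 8$. The particular choice of $n = 10$ as the normalizing index is what gives the largest possible constant $c$ for which this exact form of the bound can be pushed through by induction starting from the available small-case data.

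For the inductive step, suppose $n \geq 16$ and write either $n = 2m$ or $n = 2m+1$, so that $m \geq 8$ in either case. By strong induction I may assume $2\theta(m) \geq c^m$ and, where needed, $2\theta(m+1) \geq c^{m+1}$. Applying Theorem \ref{inductivethm2} then yields
\begin{align*}
2\theta(2m) &\geq 4\theta(m)^2 \geq c^{2m}, \\
2\theta(2m+1) &\geq 4\theta(m)\theta(m+1) \geq c^{2m+1},
\end{align*}
so the induction propagates cleanly with no slack lost. This is the same mechanism that produces Theorem \ref{DEGSThm} from the $n=16$ base point, adapted here so that it is used for every $n$ rather than restricted to powers of two.

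The real content of the theorem thus lies in the base cases $n \in \{8, 9, 10, 11, 12, 13, 14, 15\}$, because any $n \geq 16$ reduces under the DEGS recurrences to arguments of size at least $8$ and can therefore be handled by the induction above. The case $n = 10$ is automatic from the definition of $c$, so the main (and essentially only) obstacle is to verify the seven remaining inequalities $2\theta(n) \geq (2\theta(10))^{n/10}$ from tabulated values of $\theta$. Equivalently, one must confirm that $(2\theta(n))^{1/n} \geq (2\theta(10))^{1/10}$ for each $n \in \{8, 9, 11, 12, 13, 14, 15\}$, showing that $n = 10$ is the binding index in this small-case window; any index achieving a smaller value of $(2\theta(n))^{1/n}$ would force a weaker constant than the $c = 2.152\ldots$ advertised in the statement, so the selection of $n=10$ is simultaneously a normalization and an optimization over the available data.
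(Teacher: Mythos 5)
Your proof is correct: the DEGS recurrences of Theorem \ref{inductivethm2} do propagate $2\theta(n)\ge c^n$ from the base window $8\le n\le 15$, and the tabulated values confirm that $(2\theta(n))^{1/n}$ is minimized there at $n=10$, which is precisely how the constant $c=(2\theta(10))^{1/10}$ arises. The paper itself states this theorem as a quoted result of LeSaulnier and Vijay without reproducing a proof, but your induction is essentially the same scheme the paper uses for its own Theorems \ref{mainthm2} and \ref{mainthm3}, just with the base window $\[8,15\]$ in place of $\[42,83\]$.
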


In Section \ref{betterbound} we use Theorems \ref{inductivethm2} and \ref{sharmaindthm} in inductive proofs of Theorems \ref{mainthm2}, \ref{mainthm1}, and \ref{mainthm3} to improve Theorems \ref{LSVThm}, \ref{SThm}, and \ref{DEGSThm}, respectively.  We also rework Sharma's proof of Corollary 3.2.1 of \cite{sharma} relying on Theorem \ref{inductivethm2} using our additional computed values of $\theta(n)$ to improve upon Theorem \ref{SharmaLowAll} for $n \geq 19$.

\section{Algorithm Descriptions}
\label{algdes}

%We first provide a description of a backtracking algorithm that generates all 3-free permutations of $\[n\]$. Then we show how to use dynamic programming to improve the running time and thus be able to compute $\theta(n)$ for more values of $n$.
Recall from Section \ref{intro} that we write $\alpha=\(a_1, a_2, \ldots, a_n\)$ for a permutation of $\[n\]$.
For $j=1, 2, \ldots, n$, if we define 
\begin{eqnarray}
T_j \equiv \{ a_k \, | \, j \leq k \leq n \},
\end{eqnarray}
then the 3-free property of a permutation can be restated as saying that there do not exist $a_i \not \in T_j$ and $a_k~\in T_j - \{a_j\}$ such that $a_i + a_k = 2a_j$.  Further inspection of the 3-free property allows us to replace $T_j - \{a_j\}$ with $T_j$, because  $a_k = a_j$ would imply $a_i=a_j$ 
(from $a_i + a_k=2a_j$), but then $a_i$ would be in $T_j$. 
If the 3-free property holds for all $1\le j \le n$, then $\alpha$ is a 3-free permutation.
This suggests the following algorithm to generate 3-free permutations:

\begin{algorithm}\label{alg1}
\textbf{Backtracking algorithm to enumerate 3-free permutations}

\textbf{Subroutine} Enumerate

\textbf{Input:}  A (possibly empty) sequence $\rho=(p_1,p_2,\ldots,p_k)$ of distinct integers $p_k \in [n]$

\textbf{Output:}  All 3-free permutations of $[n]$ that begin with $\rho$

\begin{algorithmic}[1]
\Procedure{Enumerate}{$\rho$}
\If {$|\rho| = n$}
    \State {print $\rho$}  % Changed it back to print
\Else
    \State  $P=\{\rho_1,\rho_2,\ldots,\rho_{|\rho|}\}$
    \For{$1 \le j \le n$}
        \If {$j \notin P$ and $\nexists i\in P, k\in \[n\] \backslash P$ such that $i+k=2j$}
            \State \Call{Enumerate}{$(\rho, {j})$}
        \EndIf
    \EndFor
\EndIf
\EndProcedure
\end{algorithmic}

\textbf{Main Backtracking Algorithm}

\textbf{Input:}  Positive integer $n$

\textbf{Output:}  List of all 3-free permutations of $\[n\]$

\begin{algorithmic}[1]
\Procedure{EnumerateMain}{$n$} % Feel free to change the method name
\State { $\Call{Enumerate}{[ ]} \Comment{ \mbox {Empty sequence} }$}
\EndProcedure
\end{algorithmic}

% Algorithm has been verified
\end{algorithm}

In the subroutine \textit{Enumerate}, the notation $(\rho,j)$ on line 8 denotes the sequence obtained from appending the integer $j$ to the sequence $\rho$.  The main backtracking algorithm recursively generates the 3-free permutations one by one, so it could be of use in generating data from which new structural properties of 3-free permutations could be deduced.  Clearly its running time is bounded below by $\theta(n)$.  If we are interested in the number $\theta(n)$ of 3-free permutations and not the permutations themselves, we can speed up the counting process by using dynamic programming.  Dynamic programming algorithms (see, for instance, \cite{cormen}) solve programs by combining solutions to subproblems.  The subproblems can be dependent in that they have common subsubproblems.    

A key observation is that the 3-free property of a permutation depends on the {\em set} of elements that have been used so far in building up that permutation. 
The exact ordering of those elements is not relevant.  The dynamic programming algorithm recursively evaluates $\theta(n)$ using dynamic programming.  It uses bitsets to keep track of which integers have not been placed in an effort to build up a 3-free permutation (a \textit{bitset} is a sequence of zeros and ones.)  

%Correct usages of Count--need two inputs
%Put material from previous version back in about function f.
%Try to control page breaks--get all of each algorithm on the same page
\begin{algorithm}\label{alg2}

\textbf{Dynamic programming algorithm to count 3-free permutations}

\textbf{Subroutine} Count

\textbf{Input:}  A bitset $b$ of length $n$   % Removed the input sequence

\textbf{Output:}  The number $\theta(b)$ of 3-free permutations of $[n]$ that begin with $\rho$,
where $\rho$ is any valid initial sequence that uses exactly the integers that $b$ maps to 0. Note that if there is more than one such sequence,
then they must give the same number, due to the 3-free property

\begin{algorithmic}[1]
\Function{Count}{$b$}
\If { $\exists (b,v) \in C$ for some $v$}
    \State \Return  $v$
\ElsIf {$max\{b[1],b[2],\ldots,b[n]\} = 0$}
    \State \Return 1
\Else
    \State {$ans \gets 0$}
    \For {$1 \leq j \leq n$}
        \If {$b[j] = 1$ and $\nexists 1 \le i, k \le n$ such that $b[i] = 0$ and $b[k] = 1$ and $i + k = 2j$}
            \State {$b' \gets b$}
            \State {$b'[j] \gets 0$}
            \State {$ans \gets ans + \Call{Count}{b'}$}
        \EndIf
    \EndFor
    \State {$C \gets C \cup \{  (b, ans) \}$}
    \State \Return ans
\EndIf
\EndFunction
\end{algorithmic}

\textbf{Main Dynamic Programming Algorithm}

\textbf{Input:}  Positive integer $n$

\textbf{Output:}  $\theta(n)$

\begin{algorithmic}[1]
\Function{CountMain}{$n$}  % Feel free to change the name of this function
\State {$C \gets \emptyset$}
\State \Return $\Call{Count}{(1, 1, \ldots, 1)}\Comment{ \mbox{Bitset of $n$ ones} }$
\EndFunction
\end{algorithmic}

\end{algorithm}

In the above algorithm, $C$ denotes a set of pairs $(b,v)$, where $b$ is a bitset of length $n$ and $v$ is a non-negative integer. The set $C$ is intended to be implemented by a data structure known as a ``map''.  In our usage of C, the value of v for each b is $\theta(b)$.

Let $T \subseteq \[n\]$.  It takes $O(n)$ time to check if the 3-free property is violated and it takes $O(n)$ time to iterate over every element $t$ in $T$.  On the surface Algorithm \ref{alg2} appears to require $O(2^n)$ memory to store $\theta(T)$ for every subset $T$ of $\[n\]$ and the running time appears to be $O(n^2 2^n)$.  However, it turns out that only a small percentage of the subsets of $\[n\]$ are needed in the recurrence because most of them are not 
reachable due to a violation of the 3-free property. This helped us to tabulate $\theta(n)$ out to $n=90$.
The value of $\theta(90)$ has 31 digits.

\section{Computational Enumerative Results}
\label{enumres}

We pushed a Java implementation of the dynamic programming algorithm out to $n=90$ and updated entry A003407 of the Online Encyclopedia of Integer Sequences (\url{http://www.oeis.org/A003407}) with the values in Table~\ref{T1}.  For $n=90$, the fraction of subsets that had to be visited was only

\begin{eqnarray}
254931123/(2^{90}) \approx 2.059(10^{-19}).
\end{eqnarray}

Our Java implementation ran out of memory for $n=91$.  Algorithm \ref{alg2} does not lend itself to parallelization due to the way it uses memory.  Additional values of $\theta(n)$ can be obtained on computing platforms having additional memory, support for arbitrarily long integers, and adequate processing power. 

Before our computations, there were at least 4 published tables of values of $\theta(n)$ for $1 \leq n \leq 20$ although only two of these tables are correct.  The very first table to appear is in \cite{simmons} and claims that $73904$ is the value of $\theta(15)$ but the correct value is $\theta(15)=74904$.   For $n=17$ the table in \cite{sharma} claims that $360016$ is the value of $\theta(17)$ but the correct value is $\theta(17)=368016$.  The first twenty values of $\theta(n)$ listed above do agree with the table in \cite{degs}.  The first 20 entries in entry A003407 were correct at the time we extended them.

\begin{table}
\caption{Number of 3-free permutations $\theta(n)$ of $\[n\]$}
\label{T1} %It's actually important for numbering that this label is after \caption usage.
\begin{center}
\begin{tabular}{|c|c|c|c|c|c|}
\hline
$n$ & $\theta(n)$ & $n$ & $\theta(n)$ & $n$ & $\theta(n)$ \\
\hline \hline
1 & 1 & 31 & 41918682488 & 61 & 1612719155955443585092 \\
\hline
2 & 2 & 32 & 121728075232 & 62 & 4640218386156695178110 \\
\hline
3 & 4 & 33 & 207996053184 & 63 & 13557444070821420327240 \\
\hline
4 & 10 & 34 & 360257593216 & 64 & 39911512393313043466768 \\
\hline
5 & 20 & 35 & 639536491376 & 65 & 67867319248960144994224 \\
\hline
6 & 48 & 36 & 1144978334240 & 66 & 115643050433241064474672 \\
\hline
7 & 104 & 37 & 2362611440576 & 67 & 199272038058617170554928 \\
\hline
8 & 282 & 38 & 4911144118024 & 68 & 344053071167567188894208 \\
\hline
9 & 496 & 39 & 10417809568016 & 69 & 608578303898604406167840 \\
\hline
10 & 1066 & 40 & 22388184630824 & 70 & 1080229099508551381463536 \\
\hline
11 & 2460 & 41 & 50301508651032 & 71 & 1929269192569465070403584 \\
\hline
12 & 6128 & 42 & 113605533519568 & 72 & 3452997322628833453585008 \\
\hline
13 & 12840 & 43 & 265157938869936 & 73 & 7096327095079914521075040 \\
\hline
14 & 29380 & 44 & 622473467900178 & 74 & 14611112240136930804928288 \\
\hline
15 & 74904 & 45 & 1527398824248200 & 75 & 30235147387260979648843264 \\
\hline
16 & 212728 & 46 & 3784420902143392 & 76 & 62757445134327428602306464 \\
\hline
17 & 368016 & 47 & 9503564310606436 & 77 & 132956581436718531491070160 \\
\hline
18 & 659296 & 48 & 23991783779046768 & 78 & 282272593229156186280461264 \\
\hline
19 & 1371056 & 49 & 48820872045382552 & 79 & 605672649054377049472147568 \\
\hline
20 & 2937136 & 50 & 99986771685259808 & 80 & 1302375489530691442230524528 \\
\hline
21 & 6637232 & 51 & 209179575852808848 & 81 & 2914298247043287576460093712 \\
\hline
22 & 15616616 & 52 & 441563057878399888 & 82 & 6537258415569149903366841040 \\
\hline
23 & 38431556 & 53 & 992063519708141728 & 83 & 14713284774210886488265138336 \\
\hline
24 & 96547832 & 54 & 2241540566114243168 & 84 & 33155372641605493828236640928 \\
\hline
25 & 198410168 & 55 & 5185168615770591200 & 85 & 77219028670778815210019118736 \\
\hline
26 & 419141312 & 56 & 12057653703359308256 & 86 & 180104653062631494787580542664 \\
\hline
27 & 941812088 & 57 & 31151270610676979624 & 87 & 421733920870430143234318231648 \\
\hline
28 & 2181990978 & 58 & 81046346414827952010 & 88 & 990082990967384066255452324186 \\
\hline
29 & 5624657008 & 59 & 213208971281274232760 & 89 & 2428249522507620383597702223224 \\
\hline
30 & 14765405996 & 60 & 563767895033816986864 & 90 & 5963505178650560845887322154368 \\
\hline
\end{tabular}
\end{center}
\end{table}

\section{Proofs}
\label{betterbound}
Theorems \ref{mainthm2}, \ref{mainthm1}, and \ref{mainthm3} can be proven by induction:

\begin{proof} To prove Theorem \ref{mainthm2} it suffices, by Theorem \ref{inductivethm2}, to prove $\theta(n)~\geq~\frac{c^n}{2}$ for $42~\leq~n~\leq 83$ and some constant $c$.  Computation shows that the maximal such $c$ is $\min(2\theta(n))^{1/n} = c_1$ and occurs for $n=42$.
\end{proof}  

\begin{proof} To prove Theorem \ref{mainthm1}, we observe that, for $42 \leq n \leq 83, \max(21\theta(n))^{\frac{1}{n}} = c_2,$ and occurs for $n=64$ so (\ref{mainthm1}) holds for all $n \in \[42, 83\]$.  That inequality (\ref{mainthm1}) holds for all $n~\geq~42$ follows from using the fact that it holds for $42 \leq n \leq 83$ as a basis for an inductive argument and from Theorem \ref{sharmaindthm}.  Straightforward numerical investigation reveals that inequality (\ref{mainthm1}) actually holds for $n \geq 36$ (but not for $n \leq 35$).
\end{proof}  

\begin{proof} A proof of Theorem \ref{mainthm3} follows by induction on $k$ using Theorem \ref{inductivethm2}.
\end{proof}

To prove Theorem \ref{mainthm4}, we rework the reasoning of Section 3 of \cite{sharma} through the proof of Corollary 3.2.1 using an exponential base $\alpha > 2$ and the values of $\theta(n)$ in Table~\ref{T1}.  We obtain improved variants of Theorems 3.1 and 3.2 of \cite{sharma} along the way.  First note that:

(a) If for some positive integer $n, \theta(n) \geq \alpha^n$ and $\theta(n+1) \geq \alpha^{n+1}$ then by Theorem \ref{inductivethm2}, we have  $\theta(2n) \geq 2\alpha^{2n}$ and $\theta(2n+1) \geq 2\alpha^{2n+1}$.  

By computer verification and the data in Table~\ref{T1} , we see that $\theta(n) \geq \alpha^n$ for $n \in \left[40,79\right]$ for $\alpha = c_4$ and that $c_4$ is the maximal such value.  Thus, by (a), $\theta(n) \geq 2\alpha^n$ for $n \in \left[80,159\right]$.  Applying (a) to this inequality yields $\theta(n) \geq 8 \alpha^{n}$ for $n \in \left[160,319\right]$.  An inductive argument allows us to prove the following improvement on Theorem 3.1 of \cite{sharma}.

\begin{theorem} For integers $p \geq 2$ and $\alpha = c_4$,
\label{newThm3pt1}
\begin{eqnarray}
\theta(n) \geq 2^{2^{p-2}-1}\alpha^{n} \mbox{ for all } n \in \left[5\times 2^{p+1} , 5\times 2^{p+2}-1\right].
\end{eqnarray}
\end{theorem}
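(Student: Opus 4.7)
The plan is to induct on $p$, using Theorem \ref{inductivethm2} as the doubling step and observation (a) as its pointwise form. The base case $p = 2$ reads $\theta(n) \geq \alpha^n$ on $n \in [40,79]$; this is already established in the discussion preceding the theorem statement, since by Table \ref{T1} and the definition $\alpha = c_4 = \theta(40)^{1/40}$, $c_4$ is the largest constant making this hold, and the level-$2$ coefficient $2^{2^{0}-1}=1$ is consistent with this.

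For the inductive step, I would assume $\theta(n) \geq 2^{2^{p-2}-1}\alpha^n$ for $n \in [5\cdot 2^{p+1},\, 5\cdot 2^{p+2}-1]$ and take $m \in [5\cdot 2^{p+2},\, 5\cdot 2^{p+3}-1]$. Writing $m = 2n$ or $m = 2n+1$, both $n$ and (in the odd case) $n+1$ lie in the level-$p$ range except at one boundary point, so Theorem \ref{inductivethm2} combined with the hypothesis gives $\theta(m) \geq 2\bigl(2^{2^{p-2}-1}\bigr)^{2}\alpha^m = 2^{2^{p-1}-1}\alpha^m$, which is exactly the level-$(p+1)$ bound. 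The coefficients thus obey the recurrence $C_{p+1} = 2C_p^2$ with $C_2 = 1$, whose closed form is $C_p = 2^{2^{p-2}-1}$, matching the statement; no other ingredient is needed.

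The only real obstacle is a single off-by-one at the right end of the odd case: when $m = 5\cdot 2^{p+3}-1$, the partner index is $n+1 = 5\cdot 2^{p+2}$, which sits at the \emph{left} endpoint of the level-$(p+1)$ range rather than inside level $p$. I would resolve this by doing the induction in two sub-steps, handling all \emph{even} values of $m$ at level $p+1$ first; this in particular yields $\theta(5\cdot 2^{p+2}) \geq 2^{2^{p-1}-1}\alpha^{5\cdot 2^{p+2}} \geq 2^{2^{p-2}-1}\alpha^{5\cdot 2^{p+2}}$, which supplies exactly the estimate needed to feed into the final odd case. (A cleaner alternative is to state the inductive hypothesis on the overlapping closed range $[5\cdot 2^{p+1},\, 5\cdot 2^{p+2}]$, which absorbs the boundary entirely and reduces the argument to a textbook induction.)
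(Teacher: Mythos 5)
Your proof is correct and takes essentially the same route as the paper's: the base case $p=2$ is the computed inequality $\theta(n)\geq\alpha^{n}$ on $[40,79]$, and the inductive step doubles through Theorem \ref{inductivethm2}, yielding the coefficient recurrence $C_{p+1}=2C_{p}^{2}$ with $C_{2}=1$. The only difference is that you explicitly patch the boundary case $m=5\cdot 2^{p+3}-1$, where $(m+1)/2=5\cdot 2^{p+2}$ lies at the left end of the new interval rather than in the previous one; the paper's short proof glosses over this point, which is harmless precisely because that endpoint is even and is covered by the even case at the same level, exactly as in your fix.
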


\begin{proof}We know the statement is true for $p=2$.  Suppose the statement holds for all $p \leq l-1$.  Then for $n \in \left[5 \times 2^{l+1} , 5 \times 2^{l+2} - 1 \right]$ if $n$ is even, applying the inductive hypothesis to $\frac{n}{2}$ and using Theorem \ref{inductivethm2} verifies the theorem for $n$ (similarly for $n$ odd applying the induction hypothesis to $\frac{n-1}{2}$ and $\frac{n+1}{2}$).  This verifies the statement for $p=l$ as desired.
\end{proof}

Next, we prove the following improvement over Theorem 3.2 of \cite{sharma}.

\begin{theorem} For any fixed integer $p \geq 5$ and $\alpha = c_4$,
\label{newThm3pt2}
\begin{eqnarray}
\lim_{n \rightarrow \infty} \frac{\theta(n)}{n^p \times \alpha^n} = \infty.
\end{eqnarray}
\end{theorem}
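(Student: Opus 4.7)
The plan is to derive this theorem as a direct consequence of Theorem \ref{newThm3pt1}, by exploiting the fact that the coefficient $2^{2^{p-2}-1}$ appearing there is doubly-exponential in the interval index $p$, hence already exponential in $n$, which dwarfs any fixed polynomial $n^p$.

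Fix any integer $p \geq 5$ and let $n$ be large. I would first note that every sufficiently large $n$ lies in a unique dyadic interval of the form $[5\cdot 2^{q+1},\, 5\cdot 2^{q+2}-1]$ for some integer $q \geq 2$ (explicitly, $q = \lfloor \log_2(n/10) \rfloor$). Applying Theorem \ref{newThm3pt1} with its parameter equal to this $q$ gives
\begin{equation*}
\theta(n) \;\geq\; 2^{\,2^{q-2}-1}\,\alpha^{n}.
\end{equation*}
From $n \leq 5\cdot 2^{q+2}-1 < 20\cdot 2^{q}$ one reads off $2^{q-2} > n/80$, and therefore
\begin{equation*}
\frac{\theta(n)}{n^{p}\,\alpha^{n}} \;\geq\; \frac{2^{\,2^{q-2}-1}}{n^{p}} \;>\; \frac{2^{\,n/80\,-\,1}}{n^{p}}.
\end{equation*}

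The right-hand side is a genuine exponential in $n$ divided by a polynomial in $n$, so it tends to $\infty$ as $n \to \infty$ for every fixed $p$, which proves the theorem. I do not anticipate a real obstacle here: the work is done by Theorem \ref{newThm3pt1}, and the only care needed is to track the map $n \mapsto q$ so that one correctly converts the doubly-exponential prefactor in $q$ into a singly-exponential lower bound in $n$. Note in particular that the hypothesis $p \geq 5$ plays no role in the argument and is presumably inherited only for consistency with the analogous statement in \cite{sharma}.
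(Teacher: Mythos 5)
Your proof is correct, but it takes a genuinely different route from the paper. The paper proves the statement by a doubling/monotonicity argument: it sets $a_n = \theta(n)/(n^{p+1}\alpha^n)$, uses Theorem \ref{inductivethm2} together with the lower bound $\theta(n)\geq 2^{2^{p-2}-1}\alpha^n$ from Theorem \ref{newThm3pt1} to show $a_{2n}\geq a_n$ and $a_{2n+1}\geq a_{n+1}$, deduces $a_n\geq\gamma>0$ for all $n\geq 5\times 2^{p+1}$ where $\gamma$ is the minimum over one dyadic block, and then gets $\theta(n)/(n^p\alpha^n)=n\,a_n\geq n\gamma\to\infty$. That argument is exactly where the hypothesis $p\geq 5$ enters, via the requirement $2^{p-2}-1\geq p\log_2\alpha$ needed to make the quotient $\theta(n)/\alpha^{n+p}$ at least $1$; your observation that $p\geq 5$ is not needed is consistent with your method but not with the paper's. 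You instead read off the doubly-exponential prefactor in Theorem \ref{newThm3pt1} directly: locating $n$ in its dyadic interval indexed by $q=\lfloor\log_2(n/10)\rfloor$ and using $2^{q-2}>n/80$ gives $\theta(n)/(n^p\alpha^n)>2^{n/80-1}/n^p$, and exponential beats polynomial. Your route is shorter, dispenses with the restriction on $p$, and in fact yields the stronger conclusion that $\theta(n)\geq 2^{n/80-1}\alpha^n$ eventually, i.e.\ growth at a strictly larger exponential base than $\alpha$. What the paper's route buys is the reusable machinery $a_{2n}\geq a_n$, $a_{2n+1}\geq a_{n+1}$ with an explicit constant $\gamma$: the proof of Theorem \ref{mainthm4} invokes precisely this reasoning (with $a_n=\theta(n)/(n\alpha^n)$ and $\gamma=1/40$) to obtain the clean bound $\theta(n)\geq n c_4^n/40$ for all $n$, which your asymptotic shortcut would not supply by itself.
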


\begin{proof} Consider the sequence $a_n = \frac{\theta(n)}{n^{p+1} \times \alpha^n}$ for $n \geq 5 \times 2^{p+1}$.  Note that $a_{2n} = \frac{\theta(2n)}{(2n)^{p+1} \times \alpha^{2n}} \geq \frac{2 \times \left[\theta(n) \right]^2}{(2n)^{p+1} \times \alpha^{2n}} \geq \frac{\theta(n)}{n^{p+1} \times \alpha^n} \times \frac{\theta(n)}{\alpha^{n+p}} = a_n \times \frac{\theta(n)}{\alpha^{n+p}} \geq a_n$ (as $\theta(n) \geq 2^{2^{p-2}-1}\alpha^n$ and $2^{p-2}-1 \geq p\log_2\alpha$ for the intervals of $n$ and $p$ values).  Similarly $a_{2n+1} \geq a_{n+1}$ for all such $n$ (proof is identical with the additional step of noting that $(2n+2)^{p+1} \geq (2n+1)^{p+1}$).  Let $\gamma = \min a_n$ for $n \in \left[5 \times 2^{p+1} , 5 \times 2^{p+2} - 1\right]$.  Using the statements $a_{2n} \geq a_n$ and $a_{2n+1} \geq a_{n+1}$ recursively implies $a_n \geq \gamma$ for all $n \geq 5 \times 2^{p+1}$.  Therefore $\frac{\theta(n)}{n^p \times \alpha^n} = n \times a_n \geq n \times \gamma$ for all $n \geq 5 \times 2^{p+1}$ and $\frac{\theta(n)}{n^p \times \alpha^n}$ clearly tends to $\infty$ as $n \rightarrow \infty$ as desired.
\end{proof}

We now prove Theorem \ref{mainthm4}.

\begin{proof}Let $a_n =\frac{\theta(n)}{n \times \alpha^n}$.  From the values of $\theta(n)$ in Table~\ref{T1} we note that $a_n \geq \frac{1}{40}$ for all $n \in [40,79]$.  Since $\theta(n) \geq \alpha^n$ for all $n \geq 40$ by Theorem \ref{newThm3pt1}, reasoning as in the proof of Theorem \ref{newThm3pt2} lets us prove that $a_{2n} \geq a_n$ and $a_{2n+1} \geq a_{n+1}$ for all $n \geq 40$.  This proves that $a_n \geq \frac{1}{40}$ for all positive integers $n$.
\end{proof}

\section{Conjecture}

Define the function $h(n) = log(\theta(n+1)) - log(\theta(n))$.  Examining a plot of $h(n)$ suggests

\begin{conjecture}
The function $h(n)$ is increasing on the intervals $\[2^k , 2^k + 2^{k-1} - 1\]$ and $\[2^k + 2^{k-1}, 2^{k+1} - 1\]$ but is decreasing on $\[2^k + 2^{k-1} - 1 , 2^k + 2^{k-1}\]$ and the interval $\[2^{k+1}-1,2^{k+1}\]$ for $k \geq 2$.
\end{conjecture}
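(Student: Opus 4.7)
The plan is two-pronged: (i) verify the conjecture directly from the data in Table~\ref{T1} for small $k$, and (ii) try to leverage the recurrences of Theorem~\ref{inductivethm2} and Theorem~\ref{sharmaindthm} to constrain $h(n)$ on the relevant sub-intervals for larger $k$.

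First, I would tabulate $h(n)$ from Table~\ref{T1} and directly check the four monotonicity claims for $k=2,3,4,5$ and as much of $k=6$ as $n\le 89$ permits. This both confirms the pattern empirically and reduces the conjecture to a statement about $k\ge 7$, which is where any inductive argument would be required. In practice I would also compute $\theta(n+1)/\theta(n)$ rather than $h(n)$, since the logarithms only clutter comparisons; the claim is equivalent to
\begin{equation*}
\frac{\theta(n+2)}{\theta(n+1)} \gtrless \frac{\theta(n+1)}{\theta(n)}
\end{equation*}
with the appropriate direction on each of the four sub-intervals.

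Second, the structural step. Write $n = 2m$ or $n = 2m+1$ and use Theorem~\ref{inductivethm2} together with Sharma's matching upper bound $\theta(n)\le 21\,\theta(\lceil n/2\rceil)\theta(\lfloor n/2\rfloor)$ to squeeze the ratio $\theta(n+1)/\theta(n)$ between expressions involving only $\theta$ at roughly half the length. Schematically,
\begin{equation*}
\frac{2\,\theta(m)\theta(m+1)}{21\,\theta(m)^2} \;\le\; \frac{\theta(2m+1)}{\theta(2m)} \;\le\; \frac{21\,\theta(m)\theta(m+1)}{2\,\theta(m)^2},
\end{equation*}
and similarly for the step $2m+1 \to 2m+2$. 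If one could inductively propagate the conjectured monotonicity of $h$ on $[2^{k-1},2^k]$ to $[2^k,2^{k+1}]$ through these bounds, the four sub-interval claims on level $k$ would reduce to the same four claims on level $k-1$, closing the induction.

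The hard part will be that the multiplicative gap between the known lower and upper bounds for $\theta(2m)$ is at least a factor of $21/2$, which is far too loose to detect the one-step dips at $n=2^k+2^{k-1}-1$ and $n=2^{k+1}-1$ that the conjecture predicts. In other words, the conjecture asserts a level of rigidity in $\theta$ that is not visible through any recurrence currently in the literature. I therefore expect the main obstacle to be producing \emph{sharp} analogues of Theorem~\ref{inductivethm2}, presumably via an explicit (near-)bijection between 3-free permutations of $[2m]$ and pairs of 3-free permutations of $[m]$ that loses only a $1+o(1)$ factor when $m$ is itself a power of 2 or $3\cdot 2^{k-1}$. Developing such a bijection, and showing that its error term has the correct sign at the two conjectured dips, is where the real work lies; a full proof of the conjecture almost certainly requires this structural input in addition to the inductive scaffolding above.
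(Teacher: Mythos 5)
This statement is presented in the paper purely as a conjecture, supported only by inspecting a plot of $h(n)$ computed from Table~\ref{T1}; the paper contains no proof of it, so there is no argument of the authors' to compare yours against. Your proposal, as you yourself concede in its final paragraph, is not a proof either: part (i) is exactly the numerical verification the authors already performed (and it can only reach $k\le 6$, i.e.\ $n\le 89$, with the available data), and part (ii) cannot close the induction as written. The sandwich you obtain from Theorem~\ref{inductivethm2} and Theorem~\ref{sharmaindthm} confines $\theta(2m+1)/\theta(2m)$ only to within a multiplicative window of width $(21/2)^2$, whereas the conjecture concerns sign changes of $h(n+1)-h(n)$, i.e.\ single-step fluctuations in $\log\theta$ that are tiny compared with $\log(21/2)$. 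So the ``inductive scaffolding'' transfers nothing from level $k-1$ to level $k$: any monotonicity information about $h$ on $[2^{k-1},2^k]$ is completely washed out by the slack in the recurrences before it reaches $[2^k,2^{k+1}]$.

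The genuine missing idea is precisely the one you flag but do not supply: a structural (near-bijective) refinement of the doubling recurrence sharp enough to control $\theta(2m)/\bigl(2\theta(m)^2\bigr)$ and $\theta(2m+1)/\bigl(2\theta(m)\theta(m+1)\bigr)$ to within a $1+o(1)$ factor, together with sign control of the error at $n=2^k+2^{k-1}-1$ and $n=2^{k+1}-1$. Nothing in the paper or the cited literature provides this; indeed even the existence of $\lim\theta(n)^{1/n}$ is noted in Section~\ref{intro} as open, and your program would imply far finer asymptotic rigidity than that. In short: your diagnosis of the obstacle is accurate and consistent with the authors leaving the statement as a conjecture, but the proposal should be read as a research plan, not a proof; the conjecture remains open after it.
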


%%%%%%%%%%%%%%%%%%%%%%%%%%%%%%%%%%%%%%%%%%%%%%%%%%%%%%%
%\subsection*{Acknowledgements}
%Thanks to our loved ones for giving us time to do this.

%%%%%%%%%%%%%%%%%%%%%%%%%%%%%%%%%%%%%%%%%%%%%%%%%%%%%%%
\bibliographystyle{plain}

\end{document}